\newtheorem*{theorem*}{Hal\'{a}sz's theorem}
\newtheorem{theorem}{Theorem}
\newtheorem{lemma}{Lemma}
\newcommand{\Real}{\operatorname{Re}}
\newcommand{\Imag}{\operatorname{Im}}
\newenvironment{proof*}{\textbf{\emph{Proof of }}}{\hspace*{\fill} $\square$}
\author[\'{E}ric Sa{\"i}as]{\'{E}ric Sa{\"i}as}
\address{Sorbonne Universit\'{e}, LPSM, 4 Place Jussieu
F-75005 Paris, France}
\email{eric.saias@upmc.fr}
\author[Kristian Seip]{Kristian Seip}
\address{Department of Mathematical Sciences \\ Norwegian University of Science and Technology \\ NO-7491 Trondheim \\ Norway}
\email{kristian.seip@ntnu.no}
\thanks{The research of the second author was supported in part by Grant 275113 of the Research Council of Norway.}
\begin{document}

\subjclass[2010]{11M41, 11N64}

\title[A footnote to a theorem of Hal\'{a}sz]{A footnote to a theorem of Hal\'{a}sz}
 

\maketitle
 
 \begin{abstract}
We study multiplicative functions $f$ satisfying $|f(n)|\le 1$ for all $n$, the associated Dirichlet series $F(s):=\sum_{n=1}^{\infty} f(n) n^{-s}$, and the summatory function $S_f(x):=\sum_{n\le x} f(n)$. Up to a possible trivial contribution from the numbers $f(2^k)$, $F(s)$ may have at most one zero or one pole on the one-line, in a sense made precise by Hal\'{a}sz. We estimate $\log F(s)$ away from any such point and show that if $F(s)$ has a zero on the one-line in the sense of Hal\'{a}sz, then
$|S_f(x)|\le (x/\log x) \exp\big(c\sqrt{\log \log x}\big)$ for all $c>0$ when $x$ is large enough. This bound is best possible.

 \end{abstract}

Hal\'{a}sz  obtained in \cite{Ha1, Ha} some fundamental results on the mean values of multiplicative functions $f$ subject to the restriction $|f(n)|\le 1$ for all nonnegative integers $n$. We denote this class of functions by $\mathcal{M}$ and set 
\[ S_f(x):=\sum_{n\le x} f(n) \quad \text{and} \quad F(s):=\sum_{n=1}^{\infty}\frac{f(n)}{n^s}, \] 
where the latter series converges absolutely for $\sigma:=\Real s >1$. Following Montgomery \cite{M1}, we have the following.
\begin{theorem*}
Suppose that $f$ belongs to $\mathcal{M}$. Then for every real $t$ with at most one exception, we have
\begin{equation} \label{eq:except} F(\sigma+it)=o\left(\frac{1}{\sigma-1} \right), \quad \sigma \searrow 1. \end{equation}
If there exists an exceptional $t=t_0$ for which \eqref{eq:except} does not hold, then
\begin{equation} \label{eq:exceptional} F(\sigma+it_0) \asymp \frac{1}{\sigma-1}, \quad 1<\sigma \le 2. \end{equation}
Moreover, the following three assertions are equivalent:
\begin{itemize}
\item[(i)] $S_f(x)=o(x), \quad x\to \infty$; 
\item[(ii)] For every real $t$, $F(\sigma+it) = o\big(1/(\sigma-1)\big)$ when $\sigma\searrow 1$;
\item[(iii)] For every real $t$, we have 
\[ \sum_p \frac{1-\Real\big( f(p) p^{-it} \big)}{p}=+ \infty \quad
\text{or} \quad  f(2^k)=-2^{ikt} \quad \text{for all $k\ge 1$}. \]
\end{itemize}
\end{theorem*} 
The three equivalent assertions (i), (ii), (iii) give a more precise statement about the case $S_f(x)=o(x)$ than what is found in the usual ``textbook version'' of Hal\'{a}sz's theorem; see for example \cite[Sect. 4.3]{Te}. All the statements above can still be extracted from Satz 1' of \cite{Ha1}. The second alternative in item (iii) accounts for a trivial reason for having $F(\sigma+it)=o(1/(\sigma-1))$ when $\sigma\searrow 1$, namely the existence of $t$ such that
\[ \sum_{k\ge 0} \frac{f(2^k)}{2^{k(\sigma+it)}}=\frac{2^{\sigma}-2}{2^{\sigma}-1}. \]
In our first theorem, we exclude this possibility by considering the subclass $\mathcal{M}_2$ of $\mathcal{M}$ consisting of $f$ for which $f(2^k)=0$ for every $k\ge 1$. 

We may think of the exceptional case $t=t_0$ in Hal\'{a}sz's theorem as the assertion that $F(s)$ has a ``simple pole'' at the point $s=1+it_0$. Following \cite[Thm. 2.1]{S}, we find it natural to treat such ``poles'' on equal terms with possible ``zeros'' on the line $\sigma=1$. This allows us to incorporate the following  consequence of the prime number theorem in the first part of the theorem: if there is such a ``zero'' or a ``pole'', there can be no other point of the same kind. This version of Hal\'{a}sz's result also comes with a precise estimate:
\begin{theorem}\label{thm:key}
Suppose that $f$ belongs to $\mathcal{M}_2$. Then for every real $t$ with at most one exception, 
\begin{equation} \label{eq:exc} \lim_{\sigma\searrow 1} \frac{|F(\sigma+it)|^{\varepsilon}}{\sigma-1}=+\infty \end{equation}
for both $\varepsilon=-1$ and $\varepsilon=1$. In fact, if there exists a pair $(\varepsilon,t)=(\varepsilon_0,t_0)$ in
$\{-1,1\}\times \mathbb{R}$ for which \eqref{eq:exc} does not hold, then for $1<\sigma\le 3/2$, 
\[ [F(\sigma+i t_0)]^{\varepsilon_0} \asymp (\sigma-1) \]
and
\begin{equation}\label{eq:bound} \varepsilon_0 \log F(\sigma+it)+ \log \zeta(\sigma+it-it_0) = o\left(\sqrt{\log \frac{1}{\sigma-1}}\right), \end{equation}
uniformly for all real $t$ when $\sigma\searrow 1$.
\end{theorem}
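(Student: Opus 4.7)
The plan is to recognize $\varepsilon_0\log F(s)+\log\zeta(s-it_0)$ as essentially a Dirichlet series $\sum_p b_p/p^s$ over primes with coefficients $b_p:=\varepsilon_0 f(p)+p^{it_0}$, then exploit the positivity structure at $t=t_0$ to prove an $\ell^2$-type convergence and apply Cauchy--Schwarz.

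First I would verify that $\log F(s)$ makes sense for $\sigma>1$: since $f(2^k)=0$, the Euler product is over odd primes only, and each odd local factor $L_p(s)=1+\sum_{k\ge 1}f(p^k)p^{-ks}$ satisfies $|L_p(s)-1|\le (p^\sigma-1)^{-1}\le 1/2$. Expanding $\log L_p$ via $\log(1+z)=z+O(|z|^2)$ and summing yields $\log F(s)=\sum_{p\ge 3}f(p)p^{-s}+O(1)$ uniformly in $t$; the same for $\zeta$ gives
$$ \varepsilon_0\log F(\sigma+it)+\log\zeta(\sigma+it-it_0)=\sum_{p\ge 3}\frac{b_p}{p^{\sigma+it}}+O(1),\qquad \sigma>1. $$
Taking the real part at $t=t_0$ and using $\log\zeta(\sigma)=-\log(\sigma-1)+O(1)$, this becomes
$$ \log\bigl(|F(\sigma+it_0)|^{\varepsilon_0}/(\sigma-1)\bigr)=\Phi(\sigma)+O(1), $$
where $\Phi(\sigma):=\sum_{p\ge 3}(1+\varepsilon_0\Real(f(p)p^{-it_0}))/p^\sigma$ is a sum of nonnegative terms, hence nondecreasing as $\sigma\searrow 1$.

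Failure of \eqref{eq:exc} at $(\varepsilon_0,t_0)$ forces the left-hand side, and therefore $\Phi$, to remain bounded as $\sigma\searrow 1$; monotone convergence yields $\sum_{p\ge 3}(1+\varepsilon_0\Real(f(p)p^{-it_0}))/p<\infty$, and exponentiating gives $[F(\sigma+it_0)]^{\varepsilon_0}\asymp \sigma-1$. Since $|b_p|^2=|f(p)|^2+1+2\varepsilon_0\Real(f(p)p^{-it_0})\le 2(1+\varepsilon_0\Real(f(p)p^{-it_0}))$ in view of $|f(p)|\le 1$, we also obtain $\sum_{p\ge 3}|b_p|^2/p<\infty$.

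The main technical step is then passing from this convergence to the uniform bound \eqref{eq:bound}. Given $\eta>0$, choose $N$ so that $\sum_{p>N}|b_p|^2/p<\eta^2$; Cauchy--Schwarz combined with $\sum_{p>N}p^{1-2\sigma}=\log(1/(\sigma-1))+O(1)$ gives
$$ \Big|\sum_{p>N}\frac{b_p}{p^{\sigma+it}}\Big|\le\eta\sqrt{\log\tfrac{1}{\sigma-1}}+O(\eta), $$
uniformly in $t$, while the truncated head $\sum_{3\le p\le N}b_pp^{-(\sigma+it)}$ is $O_N(1)$. Letting first $\sigma\searrow 1$ and then $\eta\to 0$ delivers \eqref{eq:bound} uniformly in $t$. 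The uniqueness statement falls out immediately: for $t\ne t_0$, $\log\zeta(\sigma+it-it_0)$ stays bounded, since $\zeta$ has no zeros on $\Real s=1$ away from $s=1$, so $\varepsilon_0\log F(\sigma+it)=O(1)+o\bigl(\sqrt{\log 1/(\sigma-1)}\bigr)$ makes \eqref{eq:exc} hold for both $\varepsilon=\pm 1$; at $t=t_0$, the sign $-\varepsilon_0$ is handled directly by the asymptotic $[F]^{\varepsilon_0}\asymp\sigma-1$.
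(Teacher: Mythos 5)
Your proof is correct, and it reaches the same conclusion by a genuinely different technical route from the paper. The paper outsources the hard estimate to a separate Lemma whose proof writes $\varepsilon_0 f(p)p^{-it_0}=-|f(p)|e^{i\theta_p}$, derives $\sum_p|f(p)|\theta_p^2/p<\infty$ from the inequality $1+\varepsilon_0\Real(f(p)p^{-it_0})\ge \tfrac{|f(p)|}{2\pi}\theta_p^2$, and then splits the prime sum into real and imaginary parts: the real part is absorbed into $-\log\zeta(s-it_0)+O(1)$ because of the absolute convergence built into the hypothesis, while Cauchy--Schwarz is applied \emph{only} to the imaginary $\sin\theta_p$ sum, after a hard cut at $p\approx e^{1/(\sigma-1)}$ via Mertens's estimate $\sum_{p\le x}1/p=\log\log x+O(1)$. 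You instead package everything into the single coefficient $b_p=\varepsilon_0 f(p)+p^{it_0}$, obtain the $\ell^2$ bound $\sum_p|b_p|^2/p<\infty$ directly from the identity $|b_p|^2=|f(p)|^2+1+2\varepsilon_0\Real(f(p)p^{-it_0})\le 2\bigl(1+\varepsilon_0\Real(f(p)p^{-it_0})\bigr)$, and apply Cauchy--Schwarz to the full complex sum after a soft $\varepsilon$--$N$ truncation, which avoids the Mertens-type computation altogether. Your version thus trades the geometric $\theta_p$ parametrization and the explicit $1/(\sigma-1)$ cutoff for a cleaner algebraic identity and a softer limiting argument, and it bypasses the paper's separate lemma entirely; the paper's version is slightly more hands-on in that it isolates the imaginary part as the only place an $\ell^2$ estimate is really needed. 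Both arguments deliver the same uniformity in $t$, and your treatment of the uniqueness statement (via boundedness of $\log\zeta(\sigma+i(t-t_0))$ for $t\ne t_0$ and the quadratic blow-up of $|F|^{-\varepsilon_0}/(\sigma-1)$ at $t=t_0$) matches the paper's remark that the first part follows from the second together with the analytic properties of $\zeta$.
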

As far as the mean values of $f$ are concerned, the bound in \eqref{eq:bound} is of no interest when $\varepsilon_0=-1$. What matters is then only the behavior of $F(\sigma+it_0)$ when $\sigma\searrow 1$, and we will in particular have that
$|S_f(x)|/x$ tends to a positive limit; see \cite{GS} for precise information about the relation between $F(\sigma+it_0)$ and the mean values $S_f(x)/x$ in the case $\varepsilon_0=-1$. However, when $\varepsilon_0=1$, the estimate in \eqref{eq:bound} yields a sharp improvement of the bound in item (i) of Hal\'{a}sz's theorem.
\begin{theorem}\label{thm:estimate}
Suppose that $f$ belongs to $\mathcal{M}$. If there exists a real $t_0$ such that 
\begin{equation} \label{eq:zero} \sum_p \frac{1+\Real \big(f(p) p^{-it_0}\big)}{p} < \infty, \end{equation}
then
\begin{equation} \label{eq:upper} \limsup_{x\to \infty} \frac{|S_f(x)| \log x}{x  \exp\left(c\sqrt{\log\log x}\right)}=0
\end{equation}
for every constant $c>0$. Conversely, if $\kappa:[3,\infty)\to \mathbb{R}^+$ satisfies $\kappa(x)= o\big(\sqrt{\log \log x}\big)$ when $x\to \infty$, then there exists an $f$ in $\mathcal{M}$ such that \eqref{eq:zero} holds for $t_0=0$ and
\begin{equation} \label{eq:sharp} \limsup_{x\to \infty} \frac{|S_f(x)| \log x}{x  \exp\left(\kappa(x)\right)}=\infty. \end{equation}
\end{theorem}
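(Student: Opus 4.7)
First I would reduce to $t_0=0$ and $f\in\mathcal M_2$. The twist $f(n)\mapsto f(n)n^{-it_0}$ absorbs the phase, with bounds transferring by Abel summation; passing to $g(n):=f(n)\mathbf{1}_{2\nmid n}\in\mathcal M_2$ is then justified by the factorization $S_f(x)=\sum_{k\ge 0}f(2^k)S_g(x/2^k)$ together with geometric decay in $k$. Hypothesis \eqref{eq:zero} then places us in the $(\varepsilon_0,t_0)=(1,0)$ case of Theorem~\ref{thm:key}, so
\[ |F(\sigma+it)|\le\exp\bigl(o(\sqrt{\log 1/(\sigma-1)})\bigr)\big/|\zeta(\sigma+it)| \]
uniformly in $t$ as $\sigma\searrow 1$. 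My plan is to exploit the identity $S_f(x)\log x=T_f(x)+\Psi_f(x)$, with $T_f(x):=\sum_{n\le x}f(n)\log(x/n)$ and $\Psi_f(x):=\sum_{n\le x}f(n)\log n$, and bound both pieces by $o(x\exp(c\sqrt{\log\log x}))$ for every $c>0$.

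\textbf{The two pieces.} For $T_f$ I will use the Mellin--Perron representation $T_f(x)=(2\pi i)^{-1}\int_{(\sigma_0)}F(s)x^ss^{-2}\,ds$, which converges absolutely; choosing $\sigma_0=1+1/\log x$ and applying the displayed bound together with the classical estimate $1/|\zeta(1+it)|\ll\log(|t|+2)$ yields $|T_f(x)|=o(x\exp(c\sqrt{\log\log x}))$. For $\Psi_f$ I will use the convolution identity $f(n)\log n=(f*\Lambda_f)(n)$ (where $\Lambda_f$ is defined by $-F'/F=\sum_n\Lambda_f(n)n^{-s}$ and is supported on prime powers), giving $\Psi_f(x)=\sum_{p^k\le x}\Lambda_f(p^k)S_f(x/p^k)$. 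A bootstrap argument will then apply: starting from Halász's theorem ($|S_f(y)|=o(y)$) together with the trivial $|S_f(y)|\le y$, and iterating with the provisional $|S_f(y)|\ll y\exp(c\sqrt{\log\log y})/\log y$ for $y\le x$, I will split at $p^k=x/y_0$ for a large fixed $y_0$, so that on $p^k\le x/y_0$ the provisional bound combines with the Mertens-type estimate $\sum_{p^k\le Y}|\Lambda_f(p^k)|/p^k\ll\log Y$ (which holds for $f\in\mathcal M_2$, since the exclusion $f(2^k)=0$ is needed to control the higher prime-power contributions to $\Lambda_f$ at $p=2$), while on $x/y_0<p^k\le x$ the trivial bound contributes only $O(x\log y_0)=o(x\exp(c\sqrt{\log\log x}))$. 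The identity then closes the loop and delivers \eqref{eq:upper}.

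\textbf{Sharpness and the main obstacle.} For the converse \eqref{eq:sharp} I would construct $f$ diagonally: fix $f(p)=-1$ at every prime $p$ so that \eqref{eq:zero} at $t_0=0$ holds trivially, and then, along a sparse sequence $x_n\to\infty$, I will use the remaining freedom in $f(p^k)$ for $k\ge 2$ (constrained only by $|f(p^k)|\le 1$) at clusters of prime powers near scale $x_n$ to engineer $|S_f(x_n)|\gg(x_n/\log x_n)\exp(\kappa(x_n)+1)$; the slack $\sqrt{\log\log x_n}-\kappa(x_n)\to\infty$ provides the required room. The hard part will be the $\Psi_f$ bootstrap above: controlling the tail from prime powers $p^k$ near $x$ without losing a $\log$ factor requires Theorem~\ref{thm:key} plus the smallness of $|\Lambda_f(p)+\Lambda(p)|=|f(p)+1|\log p$ on average, which follows from \eqref{eq:zero} via $\sum_p|f(p)+1|^2/p\le 2\sum_p(1+\Real f(p))/p<\infty$.
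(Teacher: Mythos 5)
Your first part follows the standard Hal\'{a}sz--Montgomery route (the $T_f/\Psi_f$ splitting, Perron for $T_f$, a bootstrap through $f\log=f*\Lambda_f$ for $\Psi_f$). That is exactly the ``celebrated elucidation'' that the paper cites and deliberately does not reproduce; your outline is plausible, though the bootstrap step and the reduction to $\mathcal M_2$ (which needs care near $x/2^k\sim 1$ because of the $1/\log$ weight in the target) are only sketched.

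The second part, however, has a fatal gap. You freeze $f(p)=-1$ at every prime and try to manufacture a large $S_f(x_n)$ by choosing $f(p^k)$, $k\ge 2$, near scale $x_n$. This cannot work. With $f(p)=-1$ for all $p$, write each $n$ uniquely as $n=ab$ with $a$ squarefree, $b$ squareful, $(a,b)=1$; then $f(a)=\mu(a)$, so
\[ S_f(x)=\sum_{b\ \text{squareful}} f(b)\sum_{\substack{a\le x/b\\ (a,b)=1}}\mu(a). \]
Whatever values $f(b)$ with $|f(b)|\le 1$ you assign on the squareful numbers, the prime number theorem bounds each inner sum uniformly by $O\big((x/b)\exp(-c(\log(x/b))^{3/5-\varepsilon})\big)$, the tail from $b>\sqrt x$ contributes $O(x^{3/4})$ since squareful numbers up to $Y$ are $O(\sqrt Y)$, and $\sum_{b\le\sqrt x}1/b$ over squareful $b$ is bounded. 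Hence $|S_f(x)|\ll x\exp(-c'(\log x)^{3/5-\varepsilon})$, which is far below $(x/\log x)\exp(\kappa(x))$ for all large $x$. The higher prime powers are simply too sparse to carry a signal of size $x/\log x$; once $f(p)=-1$ is locked in, $S_f$ is essentially a $\mu$-average and its size is dictated by the PNT error term, with no room to maneuver.

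The paper instead keeps the arguments $\theta_p$ of $f(p)$ as the free parameter: it sets $f(p)=-e^{i\theta_p}$ with $\theta_p$ nonzero only on a sparse union of prime blocks $[x_j,x_j^{\log x_j}]$ and small enough that $\sum_p\theta_p^2/p<\infty$, which is all that \eqref{eq:zero} asks at $t_0=0$, since $1+\Real f(p)=1-\cos\theta_p\le\theta_p^2/2$. Along $\sigma_j=1+(\log x_j)^{-2}$ and $t=1$ one then forces $\Real\log F(\sigma_j+i)\gg a_j\sqrt{\log\log x_j}$ with $(a_j)\in\ell^2$; meanwhile, any bound $|S_f(x)|\ll(x/\log x)\exp(\kappa(x))$ with $\kappa=o(\sqrt{\log\log x})$ gives by partial summation $|F(\sigma+it)|\le\exp\big(\alpha(e^{1/(\sigma-1)})\sqrt{\log(1/(\sigma-1))}\big)$ with $\alpha\searrow 0$, and the two are incompatible. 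The essential point you are discarding is that \eqref{eq:zero} pins $f(p)$ to $-1$ only in an $\ell^2$ sense, and the construction lives precisely in that $\ell^2$ wiggle room at the primes, not at the higher prime powers.
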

We obtain \eqref{eq:upper} as an immediate consequence of \eqref{eq:bound} and a celebrated elucidation of item (i) of Hal\'{a}sz's theorem, expressed in terms of the size of $|F(s)|$ close to the $1$-line.\footnote{To this end, we use the classical fact that
$1/\zeta(\sigma+it) \ll \log (|t|+2)$ 
holds uniformly for $\sigma\ge 1$ and real $t$.} This result also stems from work of Hal\'{a}sz \cite{Ha1, Ha}; see Montgomery's paper \cite{M}, Tenenbaum's book \cite[Sec. III.4.3]{Te}, or the recent paper \cite{GHS}. We will therefore give below only the proof of the second part of Theorem~\ref{thm:estimate}.


Before proving our two theorems, we establish the following lemma.
\begin{lemma}\label{lem:basic}
Let $f(p)$ be a sequence of numbers satisfying $|f(p)|\le 1$. Suppose that there exist $\varepsilon_0$ in $\{-1,1\}$ and a real number $t_0$ such that 
\begin{equation} \label{eq:either} \sum_{p} \frac{1+\varepsilon_0 \Real \left(f(p) p^{-it_0}\right)}{p} < \infty . \end{equation} 
Then
\[  \varepsilon_0 \sum_{p} \frac{f(p)}{p^s} + \log \zeta(s-it_0)  = o\left(\sqrt{\log \frac{1}{\sigma-1} }\right) \]
uniformly for $s=\sigma+it$, $\sigma\searrow 1$, and real $t$.
\end{lemma}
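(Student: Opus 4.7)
The plan is to reduce the statement to a Cauchy–Schwarz estimate on a single Dirichlet series over primes, exploiting the algebraic fact that, under the hypothesis, $|g(p)|^{2} \le 2\,\Real g(p)$, where $g(p) := 1 + \varepsilon_{0} f(p) p^{-it_{0}}$.

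First I would separate the prime-power contributions from $\log \zeta$ via its Euler product:
\[
\log \zeta(s - it_{0}) \;=\; \sum_{p} \frac{1}{p^{s-it_{0}}} \;+\; r(s),
\]
where $r(s) = \sum_{p}\sum_{k\ge 2} (k p^{k(s-it_{0})})^{-1}$ is uniformly bounded for $\sigma > 1$ (independently of $t$), since the terms with $k \ge 2$ are dominated by $\sum_p p^{-2\sigma}$. Subtracting $\varepsilon_0^{2}=1$ times the first sum from what we want to estimate, the problem reduces to showing that
\[
T(s) \;:=\; \sum_{p} \frac{g(p)\,p^{it_{0}}}{p^{s}}
\;=\; o\Bigl(\sqrt{\log\tfrac{1}{\sigma-1}}\Bigr)
\]
uniformly in $t$ as $\sigma \searrow 1$.

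Next, taking absolute values inside the sum at once removes the dependence on $t$: $|T(s)| \le \sum_{p} |g(p)|/p^{\sigma}$. Since $|f(p)p^{-it_0}| \le 1$, one checks $|g(p)|^{2} = 1 + 2\varepsilon_{0}\Real(f(p)p^{-it_{0}}) + |f(p)|^{2} \le 2\,\Real g(p)$; combined with the hypothesis \eqref{eq:either}, this gives $\sum_{p} |g(p)|^{2}/p < \infty$. Given $\eps > 0$, choose $Y_{\eps}$ so large that $\sum_{p > Y_{\eps}} |g(p)|^{2}/p < \eps^{2}$. Then, by Cauchy–Schwarz,
\[
\sum_{p > Y_{\eps}} \frac{|g(p)|}{p^{\sigma}}
\;\le\; \Bigl(\sum_{p > Y_{\eps}} \frac{|g(p)|^{2}}{p}\Bigr)^{\!1/2}\!\Bigl(\sum_{p} \frac{1}{p^{2\sigma-1}}\Bigr)^{\!1/2}
\;\le\; \eps\,\sqrt{\log\tfrac{1}{\sigma-1}}\,\bigl(1+o(1)\bigr),
\]
using $\sum_{p} p^{-(2\sigma-1)} = \log\tfrac{1}{\sigma-1} + O(1)$ as $\sigma \searrow 1$. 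The complementary tail $\sum_{p \le Y_{\eps}} |g(p)|/p^{\sigma} = O_{\eps}(1)$ is negligible on the $\sqrt{\log\tfrac{1}{\sigma-1}}$-scale.

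Combining these bounds gives $\limsup_{\sigma \searrow 1} |T(s)|/\sqrt{\log\tfrac{1}{\sigma-1}} \le \eps$ uniformly in $t$, and letting $\eps \to 0$ yields the desired $o$-estimate. The only subtle point, rather than an obstacle, is the need to upgrade the natural $O$-bound (which would follow from a single application of Cauchy–Schwarz) to a genuine $o$-bound; the split-at-$Y_{\eps}$ device together with the convergence of $\sum_{p} |g(p)|^{2}/p$ handles precisely this.
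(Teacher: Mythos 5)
Your proof is correct, and it reaches the same conclusion via a route that is similar in spirit but organized differently from the paper's. Where you would work directly with $g(p)=1+\varepsilon_{0}f(p)p^{-it_{0}}$ and exploit the clean algebraic identity $|g(p)|^{2}=2\Real g(p)-\bigl(1-|f(p)|^{2}\bigr)\le 2\Real g(p)$ to get $\sum_{p}|g(p)|^{2}/p<\infty$, the paper instead writes $\varepsilon_{0}f(p)p^{-it_{0}}=-|f(p)|e^{i\theta_{p}}$ and splits into real and imaginary parts: the real part recovers $-\log\zeta(s-it_{0})+O(1)$ directly from the hypothesis, and only the imaginary part $\sum_{p}|f(p)|\sin\theta_{p}\,p^{-(s-it_{0})}$ needs to be estimated, using the elementary inequality $1-\cos\theta_{p}\gg\theta_{p}^{2}$ to deduce $\sum_{p}|f(p)|\theta_{p}^{2}/p<\infty$ before applying Cauchy--Schwarz. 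The two $\ell^{2}$-reductions are interchangeable; your version avoids introducing polar coordinates $\theta_{p}$ and is a bit more compact. The other small difference is in where you cut: the paper splits the prime sum at $p\approx e^{1/(\sigma-1)}$ (a $\sigma$-dependent threshold, with Mertens controlling the initial segment by $\log\log\frac{1}{\sigma-1}$), whereas you split at a fixed $Y_{\varepsilon}$ chosen so that the tail of $\sum_{p}|g(p)|^{2}/p$ is below $\varepsilon^{2}$, which makes the upgrade from an $O$-bound to an $o$-bound slightly more explicit. Both devices give the same $o\bigl(\sqrt{\log\frac{1}{\sigma-1}}\bigr)$ conclusion, uniformly in $t$ because in each argument absolute values are taken inside the prime sum before any estimate is made.
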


\begin{proof}[Proof of Lemma~\ref{lem:basic}]
Our initial assumption is that \eqref{eq:either} holds for either $\varepsilon_0=-1$ or $\varepsilon_0=1$.
Writing $\varepsilon_0 f(p)p^{-it_0}=:-|f(p)| e^{i\theta_p}$ with $-\pi < \theta_p\le \pi$, we see that
\[ 1+\varepsilon_0 \Real \big(f(p) p^{-it_0} \big)=  1-|f(p)| +  |f(p)|(1-\cos \theta_p) \ge |f(p)|(1-\cos \theta_p)\ge \frac{|f(p)|}{2\pi} \theta_p^2,  \]
so that \eqref{eq:either} implies that
\begin{equation} \label{eq:psum}  \sum_p \frac{|f(p)| \theta_p^2}{ p}<\infty. \end{equation}

We may now write
\begin{align} \varepsilon_0 \sum_p \frac{f(p)}{p^s} & = \sum_p \frac{\Real \left(\varepsilon_0 f(p) p^{-it_0}\right)}{p^{s-it_0}} + i \sum_p 
\frac{\Imag \left(\varepsilon_0 f(p) p^{-it_0}\right)}{p^{s-it_0}} \nonumber \\
& = - \sum_p \frac{1}{p^{s-it_0}}-i  \sum_{p} \frac{|f(p)| \sin \theta_p }{p^{s-it_0}} +O(1)\nonumber  \\
& = - \log \zeta (s-it_0) - i  \sum_{p} \frac{|f(p)| \sin \theta_p }{p^{s-it_0}} +O(1), \label{eq:im} \end{align}
which holds uniformly for $\sigma>1$.  By Mertens's theorem for the sum $\sum_{p\le x} 1/ p$, the Cauchy--Schwarz inequality, and \eqref{eq:psum}, 
\begin{align*}  \sum_{p} \frac{|f(p) \sin \theta_p |}{p^{\sigma}}
& \le \log\log \frac{1}{\sigma-1} +O(1)+\sum_{p\ge 1/(\sigma-1)} \frac{|f(p) \sin \theta_p |}{p^{\sigma}} \\ 
& \le   \log\log \frac{1}{\sigma-1} +O(1) + \left(\sum_{p} p^{1-2\sigma}\right)^{1/2} \left(\sum_{p\ge 1/(\sigma-1)} \frac{|f(p)| \theta_p^2}{p}\right)^{1/2} \\
& = 
o\left( \sqrt{\log \frac{1}{\sigma-1} } \right)\end{align*} 
when $\sigma\searrow 1$. Plugging this estimate into \eqref{eq:im}, we obtain the desired bound.
\end{proof}

\begin{proof}[Proof of Theorem~\ref{thm:key}]
Since $\zeta(s)$ has a simple pole at $s=1$, is otherwise analytic, and has no zero on $\sigma=1$, the first part of Theorem~\ref{thm:key} is an immediate consequence of the second part. To prove the latter assertion, we assume that 
\[ \frac{|F(\sigma+it_0)|^{\varepsilon_0}}{\sigma-1} \]
does not tend to $+\infty$ when $\sigma\searrow 1$
for some pair $(\varepsilon_0,t_0)$ in $\{-1,1\} \times \mathbb{R}$. By assumption, $f$ is in $\mathcal{M}_2$, whence 
 \[ F(s):=\prod_{p>2} \sum_{k=0}^\infty  \frac{f(p^k)}{p^{ks}}. \]
For $p\ge 3$, we have $|\sum_{k=1}^{\infty} f(p^k) p^{-ks} |\le 1/2$. We may therefore infer that
\[ \log F(s) =\sum_p f(p) p^{-s} + O(1) \]
and hence 
\[ \log |F(s)|  =\sum_p \Real\left(f(p) p^{-s}\right) + O(1) \]
when $\sigma>1$. It follows from this and the fact that $\zeta(s)$ has a simple pole at $s=1$ that
\[  \frac{|F(s)|^{\varepsilon_0}}{\sigma-1} \asymp \zeta(\sigma) |F(s)|^{\varepsilon_0}  
\asymp \exp\left\{  \sum_{p} \frac{1+\varepsilon_0 \Real \left(f(p) p^{-it_0}\right)}{p^{\sigma}} \right\} \]
when $1<\sigma \le 3/2$. By monotone convergence,
\[  \lim_{\sigma\searrow 1} \sum_{p} \frac{1+\varepsilon_0 \Real \left(f(p) p^{-it_0}\right)}{p^{\sigma}}= \sum_{p} \frac{1+\varepsilon_0 \Real \left(f(p) p^{-it_0}\right)}{p}. \]
By assumption, this limit is not $+\infty$, and hence
we may apply Lemma~\ref{lem:basic} to conclude.
\end{proof}

\begin{proof}[Proof of the second part of Theorem~\ref{thm:estimate}] We will assume that every $f$ in $\mathcal{M}$ for which \eqref{eq:zero} holds with $t_0=0$, satisfies
\begin{equation} \label{eq:assume} |S_f(x)|\ll \frac{x}{\log x} \exp\big(\kappa(x) \big), \end{equation}
and show that this leads to a contradiction.

We may clearly assume that $\kappa(x)$ is a continuous function. It is also plain that $\kappa(x)$ may be assumed to be nondecreasing and that $\kappa(x)/\sqrt{\log \log x}$ may be taken to be a nonincreasing function. Indeed, if $\kappa(x)$ failed to be nondecreasing, then we could use instead 
$\kappa_0(x):=\max_{3\le y \le x} \kappa(x)$; should moreover $\kappa_0(x)/\sqrt{\log\log x}$ fail to be nonincreasing, then we could replace it by
\[ \kappa_1(x):=\sqrt{\log\log x} \max_{y\ge x} \frac{\kappa_0(y)}{\sqrt{\log\log y}}, \]
which would still be a nondecreasing function being $o(\sqrt{\log\log x})$ when $x\to \infty$.

By partial summation, we have for every $1<\sigma\le 3/2$ and say $|t|\le 1$,
\begin{align*} |F(\sigma+it)| & \le 1+2 \int_{3}^{\infty} \big|S_f(y)\big| y^{-\sigma-1} dy \ll \int_{3}^{\infty}  \frac{e^{\kappa(y)}}{y^{\sigma} \log y}  dy \\
&  \le \exp\left(\kappa\big(e^{\frac{1}{\sigma-1}}\big)\right) \int_{3}^{e^{1/(\sigma-1)}} \frac{dy}{y \log y} + \int_{e^{1/(\sigma-1)}}^{\infty}  
\frac{e^{-\log\log y+\kappa(y)}}{y^{\sigma} \log y} dy. \end{align*}
Since $\kappa(y)/\sqrt{\log\log y} $ is a nonincreasing function, the function $\log\log y-\kappa(y)$ is eventually increasing, whence the above computation leads to the bound
\[ |F(\sigma+it)| \ll \exp\left(\kappa\big(e^{\frac{1}{\sigma-1}}\big)\right)\Big( \log\frac{1}{\sigma-1} +1/e \Big). \]
We may write this more succinctly as 
\begin{equation}\label{eq:below}  |F(\sigma+it)|\le \exp\left(\alpha\Big(e^{\frac{1}{\sigma-1}}\Big) \sqrt{\log \frac{1}{\sigma-1}}\right), \end{equation}
where $\alpha:[3,\infty)\to (0,\infty)$ is a nonincreasing function satisfying $\alpha(x)\to 0$ when $x\to \infty$.

We now choose a sequence of positive numbers $x_j$, growing so rapidly that $x_j^{\log x_j}<x_{j+1}$ for every $j\ge 1$ and the sequence \[ a_j:=\sqrt{\alpha\big(x_j^{\log x_j}\big)}\] is in $\ell^2$. We then set
\[ \theta_p:=\begin{cases} \frac{a_j}{\sqrt{\log\log p}},  & x_j\le p < x_j^{\log x_j}, \ \ -\Real (i p^{-i}) \ge 1/2 \\
                 0, & \text{otherwise}. \end{cases} \]
 We find that 
  \[ \sum_{p}  \frac{|\theta_p|^2}{p} \le \sum_{j=1}^{\infty} \frac{a_j^2}{\log\log x_j} \sum_{p \le  x_j^{\log x_j}} \frac{1}{p}\ll  \sum_{j=1}^{\infty} a_j^2,  \]
 where we in the last step used Mertens's theorem for the sum $\sum_{p\le x} 1/p$.  Hence $\sum_{p}  |\theta_p|^2/p<\infty$ by our choice of the sequence $a_j$. 
  Setting $f(p):=-e^{i\theta_p}$ and using Taylor's theorem to write
  \[ f(p)=-1-i\theta_p+O(\theta_p^2), \]
  we infer from this that
 \[ \Real \sum_{p} f(p) p^{-s} = - \Real \log \zeta(s)-\Real i\sum_p \theta_p p^{-s} +O(1).\] 
 It does not matter how we define $f$ for higher prime powers, but for definiteness, let us require that $f$ be completely multiplicative. Setting $\sigma=1+1/(\log x_j)^2 $ and $t=1$, we then get
 \begin{align*} \Real \log F(1+1/(\log x_j)^2+i) & = -\Real i \sum_p \theta_p p^{-1-1/(\log x_j)^2-i} +O(1) \\
 & \gg \frac{a_j}{\sqrt{\log\log x_j}}  \sum_{x_j\le p \le x_j^{\log x_j}} \frac{1}{p} +O(1) \gg a_j\sqrt{\log\log x_j}. \end{align*}
But choosing the same $\sigma=1+1/(\log x_j)^2$ and $t=1$ in \eqref{eq:below}, we reach the bound
\[ \sqrt{\alpha\big(x_j^{\log x_j}\big)}\gg 1, \]
contradicting that $\alpha(x) \searrow 0$ when $x\to \infty$, which, as observed above, is a consequence of our assumption that \eqref{eq:assume} holds for all $f$ in $\mathcal{M}$ for which \eqref{eq:zero} is true. 
\end{proof}

\section*{Acknowledgement}

We would like to thank Michel Balazard for some pertinent remarks on the subject matter of this note. We are also grateful to the anonymous referee for a careful review that helped us clarify some essential points.

\end{document}